\documentclass[11pt]{article}
\usepackage[utf8]{inputenc}
\usepackage[T1]{fontenc}
\usepackage{comment}
\usepackage{authblk}
\usepackage{centernot}
\usepackage{relsize}
\usepackage{changepage}
\usepackage{mathtools}
\usepackage[english]{babel}
\usepackage{amsmath}
\usepackage{amsthm}
\usepackage{amsfonts}
\usepackage{amssymb}
\usepackage{graphicx}
\usepackage[usenames,dvipsnames]{xcolor}
\theoremstyle{plain}
\newtheorem{theorem}{Theorem}[section]

\newtheorem{lemma}[theorem]{Lemma}

\theoremstyle{definition}

\newtheorem{remark}[theorem]{Remark}

\usepackage{empheq}

\makeatother
\definecolor{lyucol}{rgb}{0.5, 0.1, 0.1}
\definecolor{gray}{rgb}{0.5, 0.5, 0.5}

\usepackage[left=2cm,right=2cm,top=2cm,bottom=2cm]{geometry}
\usepackage{amssymb}
\usepackage{hyperref}
\makeatletter

\newlength{\dhatheight}

\usepackage[normalem]{ulem}
\usepackage{dsfont}
\showboxdepth=5
\showboxbreadth=5
\usepackage{accents}
\usepackage{verbatim}
\usepackage{ulem}
\usepackage{pgf,tikz,pgfplots}
\pgfplotsset{compat = 1.17}%take out before submission
\usepackage[all]{xy}

\newcommand{\eps}{\varepsilon}

\newcommand{\e}{\mathrm{e}}
\newcommand{\Po}{\mathrm{Po}}

\renewcommand{\Pr}{\mathbb{P}}

\newcommand{\cC}{\mathcal{C}}

\newcommand{\cE}{\mathcal{E}}

\newcommand{\cO}{\mathcal{O}}

\newcommand{\ft}{\mathfrak{t}}

\usepackage{subfig}

\usepackage{enumerate}

\title{\scshape
Color-avoiding percolation of random graphs: between the subcritical and the intermediate regime}

\author{Lyuben Lichev}
\affil{Institut Camille Jordan, University Jean Monnet, Saint-Etienne, France}

\begin{document}

\maketitle
 
\begin{abstract}
Fix a graph $G$ in which every edge is colored in some of $k\ge 2$ colors. Two vertices $u$ and $v$ are CA-connected if $u$ and $v$ may be connected using any subset of $k - 1$ colors. CA-connectivity is an equivalence relation dividing the vertex set into classes called CA-components.

In two recent papers, R\'ath, Varga, Fekete, and Molontay, and Lichev and Schapira studied the size of the largest CA-component in a randomly colored random graph. The second of these works distinguished and studied three regimes (supercritical, intermediate, and subcritical) in which the largest CA-component has respectively linear, logarithmic, and bounded size. In this short note, we describe the phase transition between the intermediate and the subcritical regime.
\end{abstract}
\noindent
Keywords: CA-percolation, Erd\H{o}s-R\'enyi random graph, giant component, phase transition\\\\
\noindent
MSC Class: 05C80, 60C05, 60K35, 82B43

\section{Introduction}

The model of color-avoiding percolation (or CA-percolation, for short) is defined as follows. Every edge in a graph $G$ is colored in some of $k\ge 2$ colors. Two vertices $u$ and $v$ in $G$ are said to be \emph{CA-connected} if $u$ and $v$ may be connected using any subset of $k - 1$ colors.  CA-connectivity defines an equivalence relation on the vertex set of $G$ whose classes are called \emph{CA-components}.

CA-percolation was introduced by Krause, Danziger, and Zlati\'c~\cite{KDZ16, KDZ17}. They studied vertex-colored graphs and analyzed a vertex analog of CA-connectivity. While some empirical observations were made for scale-free networks, the focus was put on Erd\H{o}s-R\'enyi random graphs due to their better CA-connectivity~\cite{KDZ16}. In a subsequent work, Kadovi\'c, Krause, Caldarelli, and Zlati\'c~\cite{KKCZ18} defined mixed CA-percolation where both vertices and edges have colors. To a large extent, each of these foundational papers based their conclusions on experimental evidence.

A rigorous mathematical treatment of the subject was initiated by R\'ath, Varga, Fekete, and Molontay~\cite{RVFM22}. They showed that under a certain subcriticality assumption, the size of the largest CA-component renormalized by $n$ converges in probability to a fixed constant, and studied the behavior of that constant close to criticality. Recently, the author and Schapira~\cite{LS22} showed that one may naturally distinguish a supercritical, an intermediate, and a subcritical regime, in which the size of the largest CA-component is respectively linear, logarithmic, and bounded. In this short note, we give a precise description of the size of the largest CA-component between the subcritical and the intermediate regime.

\subsection{Notation and terminology}

We reuse the notation of~\cite{LS22}. Namely, for a positive integer $m$, we denote $[m] = \{1,\ldots,m\}$. We reserve the notation $[k]$ to denote the set of colors, and the notation $[n]$ for the set of vertices of our graphs. 
Recall that for $p\in [0,1]$, the Erd\H{o}s-R\'enyi random graph $G(n,p)$ with parameters $n$ and $p$, or ER random graph for short, is the graph on the vertex set $[n]$ where the edge between any two distinct vertices is present with probability $p$, independently from all other edges.   

Consider now a non-increasing sequence of positive real numbers $\lambda_1\ge \dots \ge \lambda_k$, and define a family of $k$ independent Erd\H{o}s-R\'enyi random graphs $G_i= G(n,\tfrac{\lambda_i}{n})$ for $i\in [k]$ on the same vertex set $[n]$ (alternatively, this family can be seen as a multigraph $([n],E_1,\dots,E_k)$ where $E_i$ is the edge set of $G_i$). In order to easily refer to and distinguish the graphs $(G_i)_{i=1}^k$, we say that for every $i\in [k]$, the edges of $G_i$ are given color $i$. Define further
$$\Lambda = \lambda_1 + \dots +\lambda_k,\quad \text{and}\quad
\lambda_i^* = \Lambda - \lambda_i \quad \text{for every }i\in[k].$$
In particular, $\lambda_1^* \le \lambda_2^*\le \dots \le \lambda_k^*$. Also, we set 
$$G = G_1\cup \ldots\cup G_k,$$ 
and for $I\subseteq [k]$, 
$$G_I= \bigcup_{i\in I} G_i, \quad \text{and}\quad G^I = \bigcup_{i\in [k]\setminus I} G_i,$$
with the shorthand notation $G_i$ and $G^i$, respectively, when $I=\{i\}$.

Recall that two vertices $u$ and $v$ in $G$ are CA-connected if $u$ and $v$ are connected in each of the graphs $(G^{i})_{i=1}^k$. Moreover, CA-connectivity is an equivalence relation with classes called CA-components. The CA-component of a vertex $u\in [n]$ is denoted by $\widetilde \cC(u)$, and $|\widetilde \cC(u)|$ denotes its \emph{size}, that is, the number of vertices it contains. 

Finally, a sequence of events $(\cE_n)_{n\ge 1}$ is said to hold a.a.s.\ if $\mathbb P(\cE_n)\to 1$ as $n\to \infty$. We use standard asymptotic notations. Namely, given two positive real sequences $(f_n)_{n\ge 1}$ and $(g_n)_{n\ge 1}$, we write $f_n=o(g_n)$, or $f_n\ll g_n$, or $g_n\gg f_n$ if $f_n/g_n\to 0$ when $n\to\infty$. Also, we write $f_n=\mathcal O(g_n)$ if there exists a constant $C>0$ such that $f_n\le Cg_n$ for all $n\ge 1$, and $f_n = \Theta(g_n)$ if $f_n = \cO(g_n)$ and $g_n = \cO(f_n)$. Furthermore, we write $\xrightarrow{d}$ to denote convergence in distribution of a sequence of random variables, and $\xrightarrow{\mathbb P}$ for convergence in probability.

\subsection{Main result} 

The following theorem from~\cite{LS22} distinguishes a supercritical, an intermediate and a subcritical regime, as mentioned in the introduction.

\begin{theorem}[Theorem~1.1 in~\cite{LS22}]\label{thm:k>2}
Suppose that $k\ge 2$.
\begin{enumerate}[(i)]
    \item\label{pt i} There exists $a_1\in [0,1)$  such that
    \[\frac{\max_{u\in [n]}|\widetilde \cC(u)|}{n}\xrightarrow[n\to \infty]{\Pr} a_1.\]
    Moreover, one has $a_1>0$ if and only if  $\lambda_1^* > 1$.
    \item\label{pt ii} If $\lambda_k^* > 1 > \lambda_{k-1}^*$, then there is a positive constant $a_2$ such that
    \[\frac{\max_{u\in [n]}|\widetilde \cC(u)|}{\log n}\xrightarrow[n\to \infty]{\Pr} a_2.\]
    \item\label{pt iii} If $ \lambda_k^*<1$, then a.a.s.\ $\max_{u\in [n]}|\widetilde \cC(u)| \le k$. Moreover, there are positive constants $\beta_2, \ldots, \beta_k$, such that
    \[(N_2, \ldots, N_k)\xrightarrow[n\to \infty]{d} \bigotimes_{\ell=2}^k  \Po(\beta_\ell),\]
    that is, the random variables $(N_\ell)_{\ell=2}^k$ jointly converge in distribution to $k-1$ independent Poisson variables with means $\beta_2, \ldots, \beta_k$ as $n\to \infty$.
\end{enumerate}
\end{theorem}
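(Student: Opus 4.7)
My plan is to handle the three regimes separately, since each requires different ingredients. The basic observation, used throughout, is that two vertices $u$ and $v$ are CA-connected if and only if they lie in the same connected component of each of the $k$ auxiliary graphs $(G^i)_{i=1}^k$, and that each $G^i$ is distributed, up to a negligible number of multi-edges, as an Erd\H{o}s-R\'enyi graph $G(n,\lambda_i^*/n)$. In particular $G^i$ is supercritical exactly when $\lambda_i^* > 1$, and the three regimes correspond to how many of the $G^i$'s cross the criticality threshold.

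For part (i), when $\lambda_1^* > 1$ every $G^i$ has a unique giant component $\cG^i$ whose density is the survival probability of a $\Po(\lambda_i^*)$ branching process. Since the different colors are edge-independent, a sprinkling-style coupling should show that a.a.s.\ the intersection $\bigcap_{i=1}^k \cG^i$ is itself CA-connected, has size $a_1 n + o(n)$ for an explicit positive $a_1$, and essentially coincides with the largest CA-component. If on the contrary $\lambda_1^* \le 1$, then $G^1$ has only sublinear components, which forces every CA-component to be $o(n)$ and hence $a_1 = 0$.

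For part (ii), only $G^k$ is supercritical, while every $G^i$ with $i<k$ is subcritical. The upper bound $|\widetilde{\cC}(u)| = \cO(\log n)$ is immediate since any CA-component lies inside the $G^1$-component of each of its vertices, and $G^1$ is subcritical with maximal component of size $\cO(\log n)$ by classical Erd\H{o}s-R\'enyi estimates. For the matching lower bound I would look for vertices $v$ whose $(G^i)_{i<k}$-components all contain a common sub-tree of size $\lfloor a_2 \log n\rfloor$ that also lies in the $G^k$-giant; a first-moment computation identifies the optimal constant $a_2$ via the joint rate function of the cluster sizes, and a second-moment computation on the number of such vertices -- combined with the fact that these vertices form essentially independent ``clumps'' across a linear number of disjoint neighborhoods -- shows that at least one such $v$ exists a.a.s.

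Part (iii) is the most delicate, and will require the bulk of the work. Once $\lambda_k^* < 1$, every $G^i$ is subcritical, and CA-components of size $\ell \ge 2$ arise only from specific ``efficient'' colored motifs on $\ell$ vertices. A simple edge-counting argument gives the size bound: if $m$ denotes the number of edges of the motif, then each of the $k$ graphs $G^i$ must use at least $\ell - 1$ of them to span the $\ell$ vertices, while each edge belongs to exactly $k-1$ of the $G^i$'s, which forces $m \ge \lceil k(\ell-1)/(k-1)\rceil$, and this bound exceeds $\ell$ precisely when $\ell > k$. This kills the expected number of size-$\ell$ motifs by a factor $n^{-(m-\ell)}$ and already yields $\max_u |\widetilde{\cC}(u)|\le k$ a.a.s. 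For $2 \le \ell \le k$, I would enumerate the finite list of minimal motifs (for $\ell = 2$ these are pairs carrying direct edges of at least two distinct colors, and at $\ell = k$ the prototype is a rainbow $k$-cycle using each color exactly once), compute the first moments $\beta_\ell$ and prove joint Poisson convergence by the method of factorial moments. The main obstacle, and where I expect most of the effort to go, is to rule out that a CA-connected $\ell$-tuple is extendible to a larger CA-component -- this is what distinguishes counts of size-$\ell$ CA-components from counts of CA-connected $\ell$-tuples -- and to justify the asymptotic independence between different motif classes; both rely crucially on the subcritical assumption $\lambda_k^* < 1$ via exponential tail bounds on cluster sizes in each $G^i$.
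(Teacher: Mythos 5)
Your high-level picture is the right one, and it is essentially the route taken in~\cite{LS22} (whose Theorem~1.1 this is; the present note only quotes it): part~(\ref{pt i}) via the intersection of the giants of the $G^i$, part~(\ref{pt ii}) via an optimization between subcritical cluster sizes and the giant of $G^k$, and part~(\ref{pt iii}) via bounded ``rainbow'' motifs (double edges, distinctly colored cycles of length at most $k$) and a Poisson limit. Two small remarks on~(\ref{pt i}): no sprinkling is needed to see that $\bigcap_i\cG^i$ is CA-connected --- two vertices lying in the giant of every $G^i$ are by definition connected in every $G^i$ --- and this set is exactly a CA-component, since any vertex CA-connected to it must itself lie in every giant; the real work in~(\ref{pt i}) is the law of large numbers for the density of the intersection of the $k$ \emph{dependent} giants (the $G^i$ share all colors pairwise except two), which you do not address.

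The genuine gap is in parts~(\ref{pt ii}) and~(\ref{pt iii}), and it is the same one in both: you treat CA-connectivity as if it were witnessed inside the candidate vertex set, whereas the connections live in the ambient graphs $G^i$, whose components have size $\Theta(\log n)$; note also that $G$ itself can be supercritical even when $\lambda_k^*<1$ (e.g.\ $k=2$, $\lambda_1=\lambda_2=0.9$). Hence the step ``CA-components of size $\ell\ge 2$ arise only from motifs on $\ell$ vertices'' is not a simple edge count: witnesses may involve $\Theta(\log n)$ outside vertices, and a naive first moment over such structures fails (the number of shapes on $s\asymp\log n$ vertices is $e^{\Theta(s\log s)}$, which is not beaten by the single factor $n^{-(m-\ell)}$, and without the color constraints large two-edge-connected witnesses genuinely abound). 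What is missing is a structural control of \emph{joint} connectivity across the dependent graphs $G^i$, precisely the content of Lemma~\ref{lem:Ijconnect} and Lemma~\ref{lem:intersection} imported from~\cite{LS22}: connections in two of the graphs that do not stem from a connection in their common part involve large clusters only with probability vanishing uniformly in $n$. This is what localizes witnesses to bounded rainbow cycles in~(\ref{pt iii}) (and settles non-extendibility and asymptotic independence, which you flag but attribute only to exponential cluster tails), and in~(\ref{pt ii}) it is what shows that a large set connected in every subcritical $G^i$, $i<k$, is a.a.s.\ connected using color-$k$ edges alone; only after that reduction does the problem become the explicit optimization ``components of the single graph $G_k$ (about $n\e^{-I_{\lambda_k}s}$ of size $s$) intersected with the giant of $G^k$,'' which pins down $a_2$ and gives the matching upper bound for \emph{all} CA-components, not just for your special tree structures. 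Your ``joint rate function of the cluster sizes'' is exactly the object that the dependence between the $G^i$ prevents you from writing down directly, so as stated the argument for the sharp constant in~(\ref{pt ii}) and for $\max_u|\widetilde\cC(u)|\le k$ in~(\ref{pt iii}) is incomplete.
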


Our main result fills the gap between the last two parts of Theorem~\ref{thm:k>2}.

\begin{theorem}\label{thm:critical}
Fix a function $\zeta = \zeta(n)$ tending to $0$ as $n\to \infty$. Suppose that $\lambda_k^* = 1+\zeta$ and $\lambda_{k-1}^* < 1$ is a fixed constant. Then,
\begin{enumerate}[(i)]
    \item\label{crit i} if $\tfrac{\log \zeta^{-1}}{\log n}\to 0$ as $n\to \infty$, then 
    \[ \frac{\log(\zeta^{-1})\max_{u\in [n]} |\widetilde\cC(u)|}{\log n} \xrightarrow[n\to \infty]{\Pr} 1.\]
    \item\label{crit ii} if there is $\eps > 0$ such that $\zeta\le n^{-\eps}$ for all sufficiently large $n$, then
\begin{equation}\label{eq:crit 2} 
\sup_{n\ge 1}\; \Pr(\max_{u\in [n]} |\widetilde\cC(u)|\ge M) \xrightarrow[M\to \infty]{} 0.
\end{equation}
\end{enumerate}
\end{theorem}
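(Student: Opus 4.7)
The plan is to estimate $\mathbb{P}(|\widetilde\cC(u)| \ge t)$ at the scale $(C\zeta)^t$, then to apply a union bound for the upper estimates in both parts and a second moment argument for the matching lower bound in part~(\ref{crit i}). Throughout I will write $C^i(u)$ and $C_k(u)$ for the component of $u$ in $G^i$ and in $G_k$, respectively, so $\widetilde\cC(u) = \bigcap_{i \in [k]} C^i(u)$, and $\cC^*$ for the (barely supercritical) giant of $G^k$, of size $\sim 2\zeta n$.

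For the upper bound, I would use the inclusion $\widetilde\cC(u) \subseteq C^{k-1}(u) \cap C^{k}(u)$ and split on whether $u \in \cC^*$. When $u \in \cC^*$ one has $\widetilde\cC(u) \subseteq C^{k-1}(u) \cap \cC^*$. The difficulty is the correlation between $G^{k-1}$ and $G^k$, which share the colors $\{1,\ldots,k-2\}$; to resolve it, I would condition on the common subgraph $G^{\{k-1,k\}} = G_1 \cup \cdots \cup G_{k-2}$. Since $G_{k-1}$ and $G_k$ are independent, this renders $C^{k-1}(u)$ (a function of $G^{\{k-1,k\}} \cup G_k$) and $\cC^*$ (a function of $G^{\{k-1,k\}} \cup G_{k-1}$) conditionally independent. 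By exchangeability, each vertex of $C^{k-1}(u) \setminus \{u\}$ then lies in $\cC^*$ with conditional probability of order $\zeta$, so $|C^{k-1}(u) \cap \cC^*|$ is stochastically binomial-dominated, giving
\[
\mathbb{P}\bigl(|\widetilde\cC(u)| \ge t \mid u \in \cC^*\bigr) \;\lesssim\; \mathbb{E}\bigl[|C^{k-1}(u)|^{t-1}\bigr] \cdot \zeta^{t-1}/(t-1)! \;\lesssim\; (C_1 \zeta)^{t-1},
\]
where the moment estimate uses the subcritical tail of $|C^{k-1}(u)|$ with rate $I(\lambda_{k-1}^*) > 0$. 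Multiplying by $\mathbb{P}(u \in \cC^*) \approx 2\zeta$ yields $\mathbb{P}(|\widetilde\cC(u)| \ge t, u \in \cC^*) \lesssim (C_1 \zeta)^t$. The contribution from $u \notin \cC^*$ is dominated: by the duality for barely supercritical Erd\H{o}s-R\'enyi graphs, $|C^k(u)|$ then has subcritical tail with rate $\asymp \zeta^2/2$, and combining with the $C^{k-1}(u)$-constraint via an analogous factorial-moment argument yields an even smaller bound. A union bound over $u \in [n]$ then gives $\mathbb{P}(\max_u |\widetilde\cC(u)| \ge t) \lesssim n(C_1\zeta)^t$, which is $o(1)$ once $t \ge (1+\eta)\log n/\log\zeta^{-1}$ (part~(\ref{crit i})), and under $\zeta \le n^{-\eps}$ is at most $C_1^t \, n^{1-\eps t}$; since this vanishes for $n < t$ trivially and is bounded by $t^{1-\eps t} C_1^t$ for $n \ge t$, the sup over $n$ tends to $0$ as $t \to \infty$, which is part~(\ref{crit ii}).

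For the lower bound in part~(\ref{crit i}), I would exhibit explicit configurations. First, $\lambda_k < 1$: since $\lambda_k^* = \sum_{j \ne k}\lambda_j = 1+\zeta$ and $\lambda_j \ge \lambda_k$ for every $j$, we have $\lambda_k \le (1+\zeta)/(k-1) < 1$ when $k \ge 3$, while for $k = 2$ one has $\lambda_k = \lambda_{k-1}^* < 1$ by hypothesis. Hence $G_k$ is subcritical with $\mathbb{P}(|C_k(u)| = s) \asymp s^{-3/2} e^{-I(\lambda_k) s}$. The key observation is that if the whole color-$k$ component $C_k(u)$ is contained in $\cC^*$, then $C_k(u) \subseteq \widetilde\cC(u)$: any two vertices of $C_k(u)$ are joined by a monochromatic color-$k$ path (which lies in $G^i$ for every $i \ne k$), while containment in $\cC^*$ handles the case $i = k$. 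Using the independence $G_k \perp G^k$ and exchangeability, $\mathbb{P}(C_k(u) \subseteq \cC^* \mid |C_k(u)| = s) \approx (2\zeta)^s$, so the expected number of $u$ realising this event with $|C_k(u)| = s$ is of order $n \cdot s^{-3/2} e^{-I(\lambda_k) s} (2\zeta)^s$. At $s = (1-\eta)\log n/\log\zeta^{-1}$ this equals $n^{\eta + o(1)} \to \infty$. A second moment computation — in which the pair-correlation between two distinct candidates $u_1, u_2$ factorises unless they share a $G_k$-component, in which case the contribution carries only an $O(s)$ relative correction — then yields the existence of such a vertex a.a.s., producing $|\widetilde\cC(u)| \ge s$.

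The principal obstacle is the upper bound for $k \ge 3$: the conditional-independence trick decouples $C^{k-1}(u)$ and $\cC^*$ only after fixing the common subgraph $G^{\{k-1,k\}}$, and the subsequent averaging over this conditioning must be carried out carefully so that the polynomial-in-$t$ corrections arising from the subcritical moments of $|C^{k-1}(u)|$ do not destroy the $\zeta^t$ scaling, especially in the uniform-in-$n$ statement of part~(\ref{crit ii}). A secondary technical point is verifying that the non-giant contribution is genuinely dominated in the full range where $\zeta$ is allowed to decay.
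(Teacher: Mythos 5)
Your lower bound is sound and is essentially the paper's mechanism (components of the subcritical $G_k$ that happen to fall entirely inside the giant of $G^k$, analyzed by first/second moments; the paper packages the second-moment work into Lemma~\ref{lem:stoch.giant} and Lemma~\ref{lem:main}). The genuine gap is in your upper bound, and it is not the "technical averaging" issue you flag but a failure of the key estimate itself when $k\ge 3$. The inclusion $\widetilde\cC(u)\subseteq C^{k-1}(u)\cap C^{k}(u)$ is too lossy: the component $D$ of $u$ in the shared graph $G_1\cup\dots\cup G_{k-2}$ is contained in \emph{both} $C^{k-1}(u)$ and $C^{k}(u)$, and it enters or avoids the giant $\cC^*$ of $G^k$ as a single block. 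Hence $\Pr\left(|C^{k-1}(u)\cap \cC^*|\ge t,\ u\in\cC^*\right)\ \ge\ \Pr(|D|\ge t)\cdot\Pr\left(u\in\cC^*\ \middle|\ |D|\ge t\right)\ \gtrsim\ \e^{-ct}\,\zeta$, with only one factor of $\zeta$, not $(C_1\zeta)^t$. Equivalently, after conditioning on $G^{\{k-1,k\}}$ the events $\{v\in\cC^*\}$ for $v\in C^{k-1}(u)$ are strongly positively correlated (whole shared-color components lie in $\cC^*$ together), so the claimed binomial domination and the display $\Pr(\cdot)\lesssim \E{|C^{k-1}(u)|^{t-1}}\zeta^{t-1}/(t-1)!$ are unjustified and the target bound is false. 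With the true tail $\asymp\zeta\e^{-ct}$, the union bound $n\cdot\zeta\e^{-ct}$ does not vanish at $t\asymp\log n/\log\zeta^{-1}$ (which is $o(\log n)$ in part~(\ref{crit i})), so both the upper bound in part~(\ref{crit i}) and part~(\ref{crit ii}) collapse for $k\ge3$. (For $k=2$ your argument is fine, since then $C^{k-1}(u)$ is the $G_k$-component of $u$ and is genuinely independent of $G^k$.)

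The missing ingredient is the structural reduction the paper takes from~\cite{LS22} (restated here as Lemma~\ref{lem:intersection}, together with Lemma~\ref{lem:Ijconnect}): uniformly in $n$, a.a.s.\ there is no large set that is connected in each of $G^1,\dots,G^{k-1}$ yet not connected in $G_k$ alone. This forces any large CA-component to be connected in the single subcritical color graph $G_k$, hence to be an intersection of a component of $G_k$ with a component of $G^k$ -- and these two graphs \emph{are} independent, which is exactly what restores the $\zeta^t$-type scaling that your conditional-independence trick cannot deliver. Once this reduction is in place, part~(\ref{crit ii}) follows from a short first-moment bound on sets connected in both $G_k$ and $G^k$ (as in~\eqref{eq:fst moment}), and the upper bound in part~(\ref{crit i}) follows from the same optimization over $G_k$-component sizes that drives your lower bound (the paper's Lemma~\ref{lem:main} with $q=4\zeta$). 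Without Lemma~\ref{lem:intersection} or an equivalent argument exploiting connectivity in \emph{all} of $G^1,\dots,G^{k-1}$ simultaneously, the upper half of the theorem is not proved for $k\ge3$.
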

\noindent
Note that Proposition 1.3 from~\cite{LS22} shows~\eqref{eq:crit 2} when $\zeta = 0$. In fact, the proof of the second point of Theorem~\ref{thm:critical} is inspired by the one of the above proposition, and is based on a combination of a first moment argument and a careful analysis of the connected components of $(G_i)_{i=1}^k$. The proof of the first part combines very precise though by now standard results on the component structure of a subcritical ER random graph with a non-trivial optimization step. 
%and ideas from the proof of the second part of Theorem~\ref{thm:k>2}.

\paragraph{Plan of the paper.} In Section~\ref{sec:prelims}, we present some preliminary results. While Sections~\ref{sec:stoc.dom} and~\ref{sec:proof.ii} mention already known results whose proves we omit, Section~\ref{sec.prel.intersection} gathers several technical lemmas that constitute the optimization step in the proof of Theorem~\ref{thm:critical}. Section~\ref{sec 3} is dedicated to the proof itself. We finish with a short discussion in Section~\ref{sec 4}.

\section{Preliminaries}\label{sec:prelims}

To begin with, we recall a version of the well-known Chernoff's inequality, see e.g. Corollary~2.3 in~\cite{JLR11}.

\begin{lemma}\label{lem:Chernoff}
Let\/ $X$ be a Binomial random variable with mean $\mu$.
Then, for all\/ $\delta\in (0,1)$, we have that 
\[\mathbb{P}[|X-\mu|\geq\delta\mu]\leq2\,\e^{-\delta^2\mu/3}.\]
\end{lemma}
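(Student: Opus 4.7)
The plan is to apply the classical Cram\'er-Chernoff exponential-moment method. Writing $X = \sum_{i=1}^n X_i$ with independent Bernoulli summands of means $p_i$ summing to $\mu$, independence together with the inequality $1+x\le \e^x$ yields the moment generating function bound
\[\mathbb{E}[\e^{tX}] = \prod_{i=1}^n\bigl(1+p_i(\e^t-1)\bigr) \le \exp\bigl(\mu(\e^t-1)\bigr),\quad t\in\mathbb{R}.\]
Applying Markov's inequality to $\e^{tX}$ with $t=\log(1+\delta)>0$ for the upper tail, and to $\e^{-tX}$ with $t=-\log(1-\delta)>0$ for the lower tail, and substituting these optimal choices of $t$, gives the classical Cram\'er bounds
\[\mathbb{P}\bigl(X\ge(1+\delta)\mu\bigr) \le \left(\frac{\e^{\delta}}{(1+\delta)^{1+\delta}}\right)^{\!\mu},\qquad \mathbb{P}\bigl(X\le(1-\delta)\mu\bigr) \le \left(\frac{\e^{-\delta}}{(1-\delta)^{1-\delta}}\right)^{\!\mu}.\]

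Taking logarithms, it remains to verify that for every $\delta\in(0,1)$, both $\delta-(1+\delta)\log(1+\delta)$ and $-\delta-(1-\delta)\log(1-\delta)$ are bounded above by $-\delta^2/3$. For the first expression, Taylor-expand
\[\delta-(1+\delta)\log(1+\delta) \;=\; -\sum_{k\ge 2}\frac{(-1)^{k}\delta^{k}}{k(k-1)} \;=\; -\frac{\delta^{2}}{2}+\frac{\delta^{3}}{6}-\frac{\delta^{4}}{12}+\cdots,\]
which is an alternating series whose terms decrease in absolute value for $\delta\in(0,1)$; truncating after two terms gives $\delta-(1+\delta)\log(1+\delta)\le -\delta^{2}/2+\delta^{3}/6\le -\delta^{2}/3$, the last inequality being equivalent to $\delta\le 1$. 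For the second expression, the analogous expansion $-\delta-(1-\delta)\log(1-\delta) = -\delta^{2}/2-\delta^{3}/6-\delta^{4}/12-\cdots$ has only negative terms for $\delta\in(0,1)$, so the cruder bound $-\delta^{2}/2\le -\delta^{2}/3$ already suffices.

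Combining the two tail estimates with a union bound absorbs the prefactor $2$ and finishes the proof. The only substantive step beyond the textbook MGF computation is the calculus verification that each exponent is controlled by $\delta^{2}/3$; this is routine but relies on the careful truncation of the alternating series to extract the clean constant uniformly on $(0,1)$. Note that a slightly weaker constant (e.g. $\delta^2/4$) could be obtained with less work, but the factor $1/3$ is standard in the literature and is the one used throughout~\cite{JLR11}.
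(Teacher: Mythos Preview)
Your proof is correct and follows the standard Cram\'er--Chernoff route; the MGF bound, the optimal choices of $t$, and the two Taylor-series truncations are all handled cleanly. Note, however, that the paper does not actually prove this lemma: it is stated as a well-known fact with a citation to~\cite{JLR11} (Corollary~2.3) and no argument is given. So there is nothing to compare against beyond observing that you have supplied a complete self-contained proof where the paper chose to quote the literature.
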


\subsection{Barely supercritical regime: stochastic domination of the giant component}\label{sec:stoc.dom}
Fix $\lambda = \lambda(n)$ such that 
$n^{-1/3}\ll \lambda-1\ll 1$. Then, it is well-known that a.a.s.\ the graph $G(n,\tfrac \lambda n)$ has a unique largest connected component and
\begin{equation}\label{LLN.giant}
\frac{\max_{u\in [n]} |\cC(u)|}{(\lambda-1)n} \xrightarrow[n\to\infty]{\mathbb P} 2,
\end{equation}
see~\cite{Bol84, Luc90}. (Note that although both papers state most of their results for random graphs with a fixed number of edges, the corresponding results for $G(n,p)$ are derived analogously, as mentioned in Section~2 in~\cite{Bol84}.)

A version of the following lemma for $\lambda > 1$ appears as Lemma~2.10 in~\cite{LS22}. It provides a stochastic comparison between the size of the largest component in $G(n, \tfrac \lambda n)$ and binomial random subsets of $[n]$. The proof is a direct consequence of Chernoff's inequality for binomial random variables and~\eqref{LLN.giant}, and is therefore omitted.

\begin{lemma}\label{lem:stoch.giant}
Fix $\lambda = \lambda(n)$ such that $n^{-1/3}\ll \lambda-1\ll 1$ and $\varepsilon\in (0,2)$. Let $\cC_{\max}$ be the a.a.s.\ unique largest connected component of $G(n,\tfrac \lambda n)$. Let also $(X_v)_{v\in [n]}$ and $(Y_v)_{v\in [n]}$ be two sequences  
of i.i.d.\ Bernoulli random variables with respective parameters $(2-\eps)(\lambda-1)$ and $(2+\eps)(\lambda-1)$. Then, there is a coupling of these two sequences with $G(n,\tfrac{\lambda}n)$ such that a.a.s.\ one has
\begin{equation}\label{eq:inclusion.giant}
\{v : X_v=1\} \subseteq \cC_{\max} \subseteq \{v : Y_v=1\}.
\end{equation} 
\end{lemma}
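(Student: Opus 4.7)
The plan is to handle the two inclusions in~\eqref{eq:inclusion.giant} simultaneously by first comparing the cardinalities of the three sets involved, and then invoking a symmetric uniform coupling. The key conceptual input is that, by the invariance of $G(n,p)$ under permutations of $[n]$, conditional on $|\cC_{\max}| = c$ the vertex set of $\cC_{\max}$ is uniformly distributed over the $\binom{n}{c}$ size-$c$ subsets of $[n]$. Likewise, conditional on its cardinality, each of the sets $S_X := \{v : X_v = 1\}$ and $S_Y := \{v : Y_v = 1\}$ is uniform over the subsets of $[n]$ of the corresponding size, so everything reduces to a matter of sizes.

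For the size comparison, by~\eqref{LLN.giant} one has a.a.s.\ $|\cC_{\max}| \in [(2-\eps/2)(\lambda-1)n,\,(2+\eps/2)(\lambda-1)n]$. Since $(\lambda-1)n \gg n^{2/3}$, both binomial means $\mu_X := n(2-\eps)(\lambda-1)$ and $\mu_Y := n(2+\eps)(\lambda-1)$ tend to infinity, so Lemma~\ref{lem:Chernoff} applied with a small enough fixed $\delta = \delta(\eps) > 0$ yields a.a.s.\ $|M_X - \mu_X| \le \delta \mu_X$ and $|M_Y - \mu_Y| \le \delta \mu_Y$, where $M_X := \sum_v X_v$ and $M_Y := \sum_v Y_v$. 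Choosing for instance $\delta = \eps/(4(2+\eps))$, these concentration bounds translate into $M_X \le (2-\eps/2)(\lambda-1)n$ and $M_Y \ge (2+\eps/2)(\lambda-1)n$ a.a.s., and combining with the bounds on $|\cC_{\max}|$ gives the sandwich $M_X \le |\cC_{\max}| \le M_Y$ a.a.s.

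On this good event, I would construct the coupling as follows: sample $G(n,\lambda/n)$ first; given $\cC_{\max}$, take $S_X$ to be a uniformly random subset of $\cC_{\max}$ of cardinality $M_X$, and take $S_Y$ to be $\cC_{\max}$ together with a uniformly random choice of $M_Y - |\cC_{\max}|$ additional vertices drawn from $[n]\setminus \cC_{\max}$. A short counting identity (essentially $\binom{n-m}{c-m}/(\binom{n}{c}\binom{c}{m}) = 1/\binom{n}{m}$ and its dual for supersets) confirms that, conditional on their cardinalities, $S_X$ and $S_Y$ remain uniformly distributed over subsets of $[n]$ of the appropriate sizes, which together with the correct binomial marginals of $M_X$ and $M_Y$ matches the required joint laws. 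On the complementary (asymptotically negligible) event, couple arbitrarily. There is no substantial obstacle here; the whole content is to combine~\eqref{LLN.giant} with Chernoff to sandwich $|\cC_{\max}|$ between the two binomial sizes and then exploit the permutation symmetry of $G(n,p)$.
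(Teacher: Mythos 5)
Your argument is correct and is essentially the proof the paper has in mind (the paper omits it, citing exactly the two ingredients you use: Chernoff's inequality and~\eqref{LLN.giant}, with the exchangeability of $G(n,\tfrac{\lambda}{n})$ supplying the uniformity-given-cardinality coupling). The only point to tighten is the phrase ``couple arbitrarily'' on the bad event: to keep the Bernoulli marginals exact you should there draw the two sets uniformly at random given their binomial cardinalities, independently of $G$, which works because the good event depends only on $(|\cC_{\max}|, M_X, M_Y)$ and hence does not distort the conditional uniformity of $\cC_{\max}$ given its size.
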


\subsection{On the intersection of a giant with independent subcritical clusters}\label{sec.prel.intersection}

For every $t>0$, define $I_t = t - 1- \log t$. It is easy to check that $I_t$ is a positive constant for every positive $t$ except 1. 

Now, fix $\lambda\in (0,1)$. For every $s\ge 1$, we denote by $\ft_s$ the number of trees of size $s$ in the random graph $G(n,\tfrac{\lambda}{n})$. The next lemma gathers our knowledge for $(\ft_s)_{s\ge 1}$. Although it may be deduced by patching several already known results, we provide a sketch of proof for coherence and completeness.

\begin{lemma}\label{lem:components}
Fix $\lambda\in (0,1)$, a function $\omega = \omega(n)\to \infty$ as $n\to \infty$ such that $\omega(n) = o(\log\log n)$, and $\ell = \ell(\lambda, n) = I_{\lambda}^{-1} \left(\log n - \tfrac{5}{2}\log\log n\right) - \omega$. Then, a.a.s.\ for every $s\le \ell$, $\mathfrak{t}_s = (1+o(1))\mathbb E \mathfrak{t}_s\gg 1$. Moreover, a.a.s.\ there are at most $\e^{2I_{\lambda}\omega}$ trees of size between $\ell$ and $\ell+2\omega$ in $G(n,\tfrac{\lambda}{n})$. Finally, a.a.s.\ there are no trees of size more than $\ell+2\omega$.
\end{lemma}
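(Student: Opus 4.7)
The plan is to base the analysis on sharp asymptotics for $\bE \ft_s$, combined with a second moment argument for the first claim and Markov's inequality for the other two. Applying Cayley's formula,
\[
\bE \ft_s = \binom{n}{s}\, s^{s-2}\!\left(\frac{\lambda}{n}\right)^{s-1}\!\left(1 - \frac{\lambda}{n}\right)^{s(n-s) + \binom{s}{2} - (s-1)}\!.
\]
Stirling's approximation together with $(1-\lambda/n)^{s(n-s)} = e^{-\lambda s}(1 + O(s^2/n))$ yield, uniformly for $s \le (\log n)^2$,
\[
\bE \ft_s = (1+o(1))\, \frac{n}{\lambda \sqrt{2\pi}\, s^{5/2}}\, e^{-s I_\lambda}.
\]
Since $\ell+\omega = I_\lambda^{-1}(\log n - \tfrac{5}{2}\log\log n)$ and $s \mapsto s^{-5/2} e^{-sI_\lambda}$ is strictly decreasing in the relevant range, substitution gives $\min_{s \le \ell} \bE \ft_s = \bE \ft_\ell = \Theta(e^{I_\lambda \omega})$ and $\bE \ft_{\ell+2\omega} = \Theta(e^{-I_\lambda \omega})$.

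For the first claim, I would apply Chebyshev's inequality together with a union bound. Counting ordered pairs of vertex-disjoint trees of size $s$ gives $\bE[\ft_s(\ft_s - 1)] = (\bE \ft_s)^2 (1 + O(s^2/n))$, where the correction accounts for the disjointness constraint and for the requirement that no edges join the two trees. Hence $\mathrm{Var}(\ft_s) = \bE \ft_s + o((\bE \ft_s)^2)$ uniformly for $s = O(\log n)$, so Chebyshev and summation yield
\[
\sum_{s \le \ell}\Pr{|\ft_s - \bE \ft_s| > \eta\, \bE \ft_s} \le \frac{1}{\eta^2}\sum_{s \le \ell}\frac{1}{\bE \ft_s} + o\!\left(\frac{\log n}{\eta^2}\right).
\]
The first sum, being geometric in character, is dominated by its largest term, hence $O(e^{-I_\lambda \omega})$. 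Choosing $\eta = \eta_n \to 0$ with $\eta_n^2 \gg e^{-I_\lambda \omega}$ (possible since $\omega \to \infty$) then produces the uniform concentration $\ft_s = (1+o(1)) \bE \ft_s$ for every $s \le \ell$.

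For the second claim, monotonicity yields $\sum_{s=\ell}^{\ell+2\omega} \bE \ft_s \le (2\omega+1)\,\bE \ft_\ell = O(\omega\, e^{I_\lambda \omega})$, so Markov's inequality with threshold $e^{2I_\lambda \omega}$ fails with probability $O(\omega\, e^{-I_\lambda \omega}) \to 0$. For the third claim, I would split the range $s > \ell+2\omega$ into two regions: for $s \le (\log n)^2$, the asymptotic formula gives a geometric tail $\sum \bE \ft_s = O(e^{-I_\lambda \omega})$, so Markov excludes any such tree a.a.s.; for $s > (\log n)^2$, one invokes the classical bound that for fixed $\lambda < 1$ the largest component of $G(n, \lambda/n)$ is a.a.s.\ of size $O(\log n)$, which precludes any tree of size exceeding $(\log n)^2$.

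The principal technical point is to make the asymptotic for $\bE \ft_s$ uniform in $s$ up to $\ell$: the Stirling error $O(1/s)$ and the $(1-\lambda/n)^{O(s^2)}$ error $O(s^2/n)$ must be tracked carefully so that the second-moment estimate $\bE[\ft_s(\ft_s-1)] = (\bE \ft_s)^2(1+o(1))$ survives the union bound over $\Theta(\log n)$ values of $s$, even when the slowest growth scenario $\omega = o(\log\log n)$ is in force.
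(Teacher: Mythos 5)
Your proposal follows essentially the same route as the paper's proof: Cayley's formula plus Stirling for the first moment, a second-moment/Chebyshev argument with a union bound over $s\le\ell$ for the concentration claim, and Markov's inequality (combined with the classical bound on the largest component of a subcritical $G(n,\lambda/n)$) for the two remaining claims, so the approach is correct. Two small points to tighten when writing it out: the $(1+o(1))$ asymptotic for $\bE \ft_s$ holds only for $s\gg 1$ (for bounded $s$ the Stirling error is $\Theta(1/s)$, so keep an $\cO(\cdot)$ bound there, which suffices since one only needs $\bE\ft_s\ge \bE\ft_\ell\gg 1$ and the geometric decay of $1/\bE\ft_s$), and the error term in your Chebyshev display should be recorded as $\cO\left((\log n)^3/n\right)/\eta^2$ rather than $o(\log n/\eta^2)$, since the latter by itself is not $o(1)$ for your choice of $\eta$.
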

\begin{proof}[Sketch of proof]
To begin with, we recall that for every $s\ge 1$, there are $s^{s-2}$ trees on $s$ vertices (statement known under the name Cayley's formula). Thus, for every positive integer $s\le 2\ell$,
\begin{equation}\label{eq:Et}
\mathbb E \mathfrak{t}_s = \binom{n}{s} s^{s-2} \left(\frac{\lambda}{n}\right)^{s-1}\left(1-\frac{\lambda}{n}\right)^{s(n-s)+(s-1)(s-2)/2} =  \cO\left(\frac{n\cdot \e^{-I_{\lambda}\cdot s}}{\lambda \sqrt{2\pi s^5}}\right),
\end{equation}
where the second equality comes from an application of Stirling's formula for $s!$ and the fact that $s^2\ll n$. Note that if additionally $s\gg 1$, 
\begin{equation}\label{eq:s>>1}
\mathbb E \mathfrak{t}_s = (1+o(1))\frac{n\cdot \e^{-I_{\lambda}\cdot s}}{\lambda \sqrt{2\pi s^5}}.
\end{equation}

In particular, for every $s\in [\ell]$, $\mathbb E \ft_s\ge \mathbb E\ft_{\ell}\gg 1$. Moreover, the second moment of $\mathfrak{t}_s$ for $s\in [\ell]$ is given by
\begin{align*}
\mathbb E \mathfrak{t}_s^2 
&=\; \mathbb E \mathfrak{t}_s + \binom{n}{s}\binom{n-s}{s} s^{2(s-2)} \left(\frac{\lambda}{n}\right)^{2(s-1)}\left(1-\frac{\lambda}{n}\right)^{2s(n-2s)+s^2+(s-1)(s-2)}\\
&=\; \mathbb E \mathfrak{t}_s + (\mathbb E \mathfrak{t}_s)^2 \left(1-\frac{\lambda}{n}\right)^{-s^2} \left(\prod_{i=0}^{s-1} \frac{n-s-i}{n-i}\right) = (\mathbb E \mathfrak{t}_s)^2 + \left(\mathbb E \mathfrak{t}_s + \cO\left(\frac{(s\mathbb E \mathfrak{t}_s)^2}{n}\right)\right).
\end{align*}

Thus, for every $s\in [\ell]$, Chebyshev's inequality implies that
\begin{equation*}
\Pr(|\mathfrak{t}_s - \mathbb E \mathfrak{t}_s|\ge (\mathbb E \mathfrak{t}_s)^{2/3})\le \frac{\mathrm{Var}(\mathfrak{t}_s)}{(\mathbb E \mathfrak{t}_s)^{4/3}} = \frac{1}{(\mathbb E \mathfrak{t}_s)^{1/3}} + \cO\left(\frac{s^2(\mathbb E \mathfrak{t}_s)^{2/3}}{n}\right) = \frac{1}{(\mathbb E \mathfrak{t}_s)^{1/3}} + \cO\left( \frac{1}{n^{1/4}}\right),
\end{equation*}
which together with a union bound over all $s\in [\ell]$ and the fact that $\sum_{s=1}^{\ell} (\mathbb E \mathfrak{t}_s)^{-1/3} = \cO((\mathbb E \mathfrak{t}_{\ell})^{-1/3}) = o(1)$ shows that a.a.s.\ for every $s\le \ell$, $|\mathfrak{t}_s - \mathbb E \mathfrak{t}_s|\ge (\mathbb E \mathfrak{t}_s)^{2/3}$. Using that for every $s\in [\ell]$ we have $\mathbb E \mathfrak{t}_s\ge \mathbb E \mathfrak{t}_{\ell}\gg 1$ shows that in particular, a.a.s.\ $\mathfrak{t}_s = (1+o(1))\mathbb E \mathfrak{t}_s$ for every $s\in [\ell]$.

Moreover, combining Markov's inequality for the sum $\sum_{s=\ell}^{\ell+2\omega} \ft_s$ with the fact that for every $s\in [\ell, \ell+2\omega]$, $\mathbb E \ft_s\le \mathbb E \ft_\ell = \cO(\e^{I_{\lambda}\omega})$, shows that there are a.a.s.\ at most $\e^{2I_{\lambda}\omega}$ trees of size between $\ell$ and $\ell+2\omega$. Finally, the last point follows by combining the fact that there are a.a.s.\ no connected components in $G(n, \tfrac \lambda n)$ of size more that $2\ell$ (see e.g. Theorem~4.4 in~\cite{Hof16}),~\eqref{eq:s>>1} and Markov's inequality for the sum of $\ft_s$ over the interval $s\in [\ell+2\omega, 2\ell]$.
\end{proof}

\begin{remark}\label{rem:tk}
Recall that for $\lambda\in (0,1)$ and for every function $\nu = \nu(n)\to \infty$ as $n\to \infty$, a.a.s.\ there are only $\cO(\nu)$ vertices in components in $G(n,\tfrac {\lambda} n)$ that contain a cycle (see e.g.\ Lemma~2.11 from~\cite{FK16}). In particular, the number of components containing a cycle is a.a.s.\ negligible to $\ft_s$ for every $s\in [\ell]$. Thus, to avoid clutter, we abuse notation and identify $\ft_s$ and the number of components of size $s$ for every $s\ge 1$.
\end{remark}

Now, we fix $q = q(n)\le 1$ and consider a sequence $(X_v)_{v \in [n]}$ of independent Bernoulli random variables with parameter $q$, and a graph $G(n,\tfrac{\lambda}{n})$ independent from them. We call a vertex $v\in [n]$ \emph{black} if $X_v = 1$, and \emph{white} otherwise. The aim of the following lemma is to show that the components of $G(n,\tfrac{\lambda}{n})$ whose size is very close to the size of the largest component do not typically contain abnormally many black vertices.

\begin{lemma}\label{lem:aux}
Fix any $\lambda\in (0,1)$ and $q = q(n)\to 0$ such that $\tfrac{\log q^{-1}}{\log n}\to 0$ as $n\to \infty$. Also, define $M_0 = \tfrac{\log n}{2\log q^{-1}}$. Then, with the notation of Lemma~\ref{lem:components}, a.a.s.\ every component of $G(n,\tfrac {\lambda} n)$ of size at least $\ell$ contains at most $M_0$ black vertices. 
\end{lemma}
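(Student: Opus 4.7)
The plan is a first-moment / union-bound argument, using Lemma~\ref{lem:components} to control the number and sizes of the relevant components, and a simple Binomial tail bound to handle the black vertex counts.

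First, I would invoke Lemma~\ref{lem:components} together with Remark~\ref{rem:tk}: a.a.s., every component of $G(n,\tfrac{\lambda}{n})$ of size at least $\ell$ has size in the interval $[\ell,\ell+2\omega]$, and there are at most $\e^{2I_\lambda\omega}$ such components. Call this event $\cE_n$; since $\Pr(\cE_n)\to 1$, it suffices to prove the claim conditionally on $\cE_n$. As the black assignment $(X_v)_{v\in[n]}$ is independent of the graph, for any fixed component $C$ of size $s$ the number of black vertices in $C$ is distributed as $\mathrm{Bin}(s,q)$.

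For each such component of size $s\leq \ell+2\omega=\cO(\log n)$, a crude tail bound gives
\[\Pr(\mathrm{Bin}(s,q)\geq M_0) \;\leq\; \binom{s}{M_0}q^{M_0} \;\leq\; \left(\frac{\e sq}{M_0}\right)^{M_0}.\]
Plugging in $s=\cO(\log n)$ and $M_0=\tfrac{\log n}{2\log q^{-1}}$, the ratio $\e sq/M_0$ is $\cO(q\log q^{-1})$, which tends to $0$; in particular it is at most $q^{1/2}$ for $n$ large. Thus the per-component tail probability is bounded by $q^{M_0/2}=\exp(-\tfrac{1}{2}M_0\log q^{-1})=n^{-1/4}$. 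A union bound over the at most $\e^{2I_\lambda\omega}$ components of size $\geq \ell$, combined with $\omega=o(\log\log n)$ (so that $\e^{2I_\lambda\omega}=n^{o(1)}$), yields a total failure probability of $n^{-1/4+o(1)}=o(1)$, which finishes the proof.

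The only delicate point is the tail-bound calculation: one has to verify that $M_0$ grows fast enough that $(\e sq/M_0)^{M_0}$ decays polynomially in $n$, despite $sq$ being as large as $q\log n$. The assumption $\log q^{-1}/\log n\to 0$ is precisely what ensures $M_0\to\infty$ much faster than any constant, so that the crude Binomial tail bound is already strong enough; if $\log q^{-1}$ were comparable to $\log n$, the choice of $M_0$ would be essentially tight and this naive argument would fail.
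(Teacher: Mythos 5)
Your proof is correct, and it takes a cleaner route than the paper. The paper's argument has the same skeleton (condition on Lemma~\ref{lem:components}, so that a.a.s.\ all components of size at least $\ell$ lie in $[\ell,\ell+2\omega]$ and number at most $\e^{2I_\lambda\omega}$, then union bound using independence of the coloring from the graph), but for the per-component tail it splits into three regimes according to the size of $q\ell$ — Markov at level $\cO(q\ell\e^{2I_\lambda\omega})$ when $q\ell\le(\log n)^{-1}$, Markov at level $\sqrt{\log n}$ when $(\log n)^{-1}\le q\ell\le(\log n)^{1/3}$, and Chernoff when $q\ell\ge(\log n)^{1/3}$ — because those softer tools are not strong enough uniformly in $q$. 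Your single estimate $\Pr(\mathrm{Bin}(s,q)\ge M_0)\le\binom{s}{M_0}q^{M_0}\le(\e sq/M_0)^{M_0}\le q^{M_0/2}=n^{-1/4}$ subsumes all three cases at once (the hypotheses $q\to 0$ and $\log q^{-1}=o(\log n)$ enter exactly where you say they do: $q\to 0$ makes $\e sq/M_0=\cO(q\log q^{-1})\le q^{1/2}$, and $\log q^{-1}=o(\log n)$ keeps $M_0\to\infty$), and it even yields a polynomial failure probability $n^{-1/4+o(1)}$ rather than just $o(1)$. The only cosmetic points are that $M_0$ need not be an integer (work with $\lceil M_0\rceil$, or note that ``more than $M_0$'' means at least $\lceil M_0\rceil$ black vertices) and that when $M_0>s$ the event is empty so the bound is vacuous; neither affects the argument.
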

\begin{proof}
Firstly, suppose that $q\ell\le (\log n)^{-1}$. By Lemma~\ref{lem:components} a.a.s.\ there are $\cO(\e^{2I_{\lambda}\omega}) = o(\log n)$ vertices in components of size between $\ell$ and $\ell+2\omega$, and none of size more than $\ell+2\omega$. We condition on this event. Then, by Markov's inequality the probability that a vertex in some of these components is black is at most $\cO(q\ell\e^{2I_{\lambda}\omega})= o(1)$. Thus, when $q\ell\le (\log n)^{-1}$, the statement is trivially satisfied.

On the other hand, suppose that $q\ell\ge (\log n)^{-1}$. Then, if $q\ell\le (\log n)^{1/3}$, Markov's inequality applied for any component $\cC$ in $G$ of size between $\ell$ and $\ell+2\omega$ gives that
\begin{equation*}
\Pr\left(\sum_{v\in \cC} X_v \ge \sqrt{\log n}\right) = \cO((\log n)^{-1/6}).
\end{equation*}
Thus, a union bound over all at most $\cO(\e^{2I_{\lambda}\omega}) = o((\log n)^{1/6})$ such components and the fact that $M_0 \ge \tfrac{\log n}{8\log\log n} \gg \sqrt{\log n}$ show the lemma in this case. 

Finally, if $q\ell\ge (\log n)^{1/3}$, Chernoff's inequality implies that
\begin{equation*}
\Pr\left(\sum_{v\in \cC} X_v \ge (1+\eps)q\ell\right) \le \exp\left(-(1+o(1))\frac{\eps^2 q \ell}{2}\right) = \cO((\log n)^{-1}).
\end{equation*}
However, using that $\tfrac{1}{\log q^{-1}}\gg q$ shows that $M_0\gg q\ell$ in this case. Thus, once again, a union bound over all $\cO(\e^{2I_{\lambda}\omega}) = o(\log n)$ components of size between $\ell$ and $\ell+2\omega$ finishes the proof.
\end{proof}

For every $s\ge 1$, we denote by $\widetilde Z_s$ the number of connected components $\cC$ in $G(n,\tfrac {\lambda} n)$ such that $\sum_{v\in \cC} X_v\ge s$. The following lemma characterizes the maximal $s$ for which $\widetilde Z_s\ge 1$. The idea is that by Lemma~\ref{lem:aux}, a.a.s.\ this maximum is attained by a component of size at most $\ell$. Then, our precise knowledge of the number of components of size $s\in [\ell]$ ensured by Lemma~\ref{lem:components} (and Remark~\ref{rem:tk}) allows us to conclude by an elementary computation.

\begin{lemma}\label{lem:main}
Fix $\lambda\in (0,1)$ and $q = q(n)\le 1$ such that $\tfrac{\log q^{-1}}{\log n}\to 0$ as $n\to \infty$. Then,
\begin{equation*}
\frac{(\log q^{-1}) \max\{s: \widetilde Z_s\ge 1\}}{\log n} \xrightarrow[n\to \infty]{\Pr} 1.
\end{equation*}
\end{lemma}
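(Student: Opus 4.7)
The plan is to show $\widetilde Z_{s^-}\ge 1$ and $\widetilde Z_{s^+}=0$ a.a.s., where $s^{\pm}:=\lceil (1\pm\eps)\log n/\log q^{-1}\rceil$ for an arbitrary small $\eps>0$; together these two statements imply the claimed convergence in probability. The heuristic behind both bounds is that the cheapest way to place $s$ black vertices inside one component is to take an entirely-black tree of size $\approx s$, since the expected number of such configurations is $\mathbb E\ft_s\cdot q^s$ whose logarithm reads $\log n-s(I_\lambda+\log q^{-1})+O(\log s)$ by~\eqref{eq:s>>1}; as $\log q^{-1}\to\infty$ the $I_\lambda$ term is asymptotically negligible and the threshold lies at $s\sim\log n/\log q^{-1}$. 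Throughout I exploit the fact that $(X_v)_{v\in[n]}$ is independent of the graph.

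For the lower bound I condition on $G$ and count entirely-black tree components of size $s^-$; note that $s^-<\ell$ for $n$ large, since $\log q^{-1}\to\infty$. By Lemma~\ref{lem:components} and Remark~\ref{rem:tk} we have $\ft_{s^-}=(1+o(1))\mathbb E\ft_{s^-}$ a.a.s., so conditionally on the graph the number of entirely-black size-$s^-$ tree components is $\mathrm{Bin}(\ft_{s^-},q^{s^-})$, with mean at least $n^{\eps/2}$ for large $n$ (using $s^-(I_\lambda+\log q^{-1})=(1-\eps+o(1))\log n$). A Chernoff bound conditional on $\ft_{s^-}$ then yields $\widetilde Z_{s^-}\ge 1$ a.a.s.

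For the upper bound I partition components by size. Components of size at least $\ell$ contain at most $M_0=\log n/(2\log q^{-1})<s^+$ black vertices a.a.s.\ by Lemma~\ref{lem:aux}, and the non-tree components of size in $[s^+,\ell)$ together have size $O(\log\log n)\ll s^+$ a.a.s.\ by Remark~\ref{rem:tk}. For tree components of size $t\in[s^+,\ell)$, combining~\eqref{eq:Et} with the union-bound estimate $\mathbb P(\mathrm{Bin}(t,q)\ge s^+)\le (etq/s^+)^{s^+}$ yields
\[
\mathbb E\widetilde Z_{s^+}\le C\sum_{t=s^+}^{\ell}\frac{n}{t^{5/2}}\,e^{-I_\lambda t}\Bigl(\frac{etq}{s^+}\Bigr)^{s^+}.
\]
Maximising $t\mapsto t^{s^+}e^{-I_\lambda t}$ at $t^{*}=s^+/I_\lambda$ (or at $t=s^+$ if $I_\lambda\ge 1$) bounds the sum by $Cn\ell(s^+)^{-5/2}(q/I_\lambda)^{s^+}$, whose logarithm is $\log n-s^+(\log q^{-1}+\log I_\lambda)+O(\log\log n)=-(\eps+o(1))\log n\to-\infty$, and Markov's inequality then gives $\widetilde Z_{s^+}=0$ a.a.s.

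The main obstacle I foresee is precisely the optimization in the upper bound. Restricting attention only to entirely-black trees of size $s^+$ produces the weaker exponent $-s^+(\log q^{-1}+I_\lambda)$, and it is only because $\log q^{-1}\to\infty$ that this already captures the correct leading term; to get a genuinely sharp first-moment bound one must work with the stratum $t^{*}=s^+/I_\lambda$, in which only a fraction $I_\lambda$ of the component is black but the exponential decay $e^{-I_\lambda t}$ is perfectly balanced against the combinatorial factor $\binom{t}{s^+}$. Lemma~\ref{lem:aux} is the other essential ingredient, since without it the largest subcritical components, whose black population concentrates around $qt$ rather than being well described by a union bound, would have to be controlled by a separate Chernoff-type argument.
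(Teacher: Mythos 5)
Your argument is correct and rests on the same three pillars as the paper's own proof (Lemma~\ref{lem:components}, Remark~\ref{rem:tk}, Lemma~\ref{lem:aux}), but the execution differs. The paper conditions on $(\ft_s)_{s\le \ell}$ and expresses the conditional probability that no component of size at most $\ell$ carries $M$ black vertices as $\prod_{s\le\ell}\bigl(1-\Pr(\mathrm{Bin}(s,q)\ge M)\bigr)^{\ft_s}$, so that a single estimate of $\sum_s \ft_s\Pr(\mathrm{Bin}(s,q)\ge M)$ --- with the dominant component size $s_1=(1+o(1))M/(1-(1-q)\e^{-I_\lambda})$ located by comparing consecutive terms --- yields both directions of the threshold at once. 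You decouple the two directions: for the upper bound you carry out essentially the same first-moment optimization explicitly (union bound $\binom{t}{s^+}q^{s^+}$, maximum of $t^{s^+}\e^{-I_\lambda t}$ at $t^*=s^+/I_\lambda$), while for the lower bound you only count entirely black tree components of the single size $s^-$, using that conditionally on the graph their number is $\mathrm{Bin}(\ft_{s^-},q^{s^-})$ with mean at least $n^{\eps/2}$. The all-black choice is suboptimal (the paper's dominant component is larger and only partially black), but, as you note yourself, since $\log q^{-1}\to\infty$ the loss is only an $\e^{O(s^-)}$ factor, negligible against $q^{s^-}$; what you lose in sharpness of constants you gain by avoiding the conditional product formula and the consecutive-ratio analysis.

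Two small points to tighten. First, both of your bounds silently use $\log q^{-1}\to\infty$ (it is needed for $s^\pm=o(\log n)$, for the negligibility of $I_\lambda s^\pm$, and for the applicability of Lemma~\ref{lem:aux}, which is stated for $q\to 0$); this does not follow from the literal hypotheses $q\le 1$ and $\log q^{-1}=o(\log n)$, but it is what the paper itself needs and uses (in the application $q=\Theta(\zeta)\to 0$), so you should simply state it as a standing assumption. Second, your dismissal of cyclic components via ``total size $O(\log\log n)\ll s^+$'' is not quite justified when $\log q^{-1}$ grows almost as fast as $\log n$, since then $s^+$ may grow more slowly than $\log\log n$; as Remark~\ref{rem:tk} holds for every function $\nu\to\infty$, apply it with, say, $\nu=\sqrt{s^+}$ to conclude that a.a.s.\ every component containing a cycle has fewer than $s^+$ vertices. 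Neither point affects the validity of the overall scheme.
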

\begin{proof}
To begin with, we condition on the events in Lemma~\ref{lem:components} and on the sequence $(\ft_s)_{s\ge 1}$ defined therein. Recall that by Remark~\ref{rem:tk}, $\ft_s$ is identified with the number of components in $G(n, \tfrac \lambda n)$ of size $s$.

For every $s\ge 1$, define $\widetilde Z_s^{\ell}$ as the number of components $\cC$ of size at most $\ell$ such that $\sum_{v\in \cC} X_v\ge s$. By Lemma~\ref{lem:aux} a.a.s.\ $\widetilde Z_s^{\ell} = \widetilde Z_s$ for all $s\ge M_0+1$, so we may concentrate our attention on components of size at most $\ell$.

Now, given $M\in [M_0+1, \ell]$, we have that
\begin{align}\label{eq: Z_s}
\Pr(\exists s\in [M, \ell], \widetilde Z_s^{\ell}\ge 1) = \prod_{\substack{\cC\in G(n,\lambda/n),\\ |\cC|\le \ell}} (1-\Pr(\mathrm{Bin}(|\cC|, q)\ge M)) = \prod_{s=M}^{\ell} \bigg(1-\Pr(\mathrm{Bin}(s, q)\ge M)\bigg)^{\ft_s},
\end{align}
where the first product is over the set of connected components of $G(n,\tfrac{\lambda}{n})$ of size at least $\ell$. Thus, on the one hand, the fact that a.a.s.\ every component (even the ones of size more than $\ell$) has less than $M$ black vertices shows that $\Pr(\mathrm{Bin}(s, q)\ge M) = o(1)$ for every $s\in [\ell]$. On the other hand, the fact that $M\ge M_0\ge 2\ell q$ shows that for every $s\in [\ell]$, $\Pr(\mathrm{Bin}(s, q)\ge M) = \cO(\Pr(\mathrm{Bin}(s, q) = M)) = \cO(\tbinom{s}{M} q^{M}(1-q)^{s-M})$. This allows us to simplify~\eqref{eq: Z_s} to
\begin{align*}
\Pr(\widetilde Z_s^{\ell}\ge 1) = \exp\left(-\Theta\left(\sum_{s=M}^{\ell} \binom{s}{M} q^{M} (1-q)^{s-M} \ft_s\right)\right).
\end{align*}

Hence, using~\eqref{eq:s>>1} and the fact that $M\gg 1$, for $s\in [M,\ell-1]$ we have that $\tfrac{\ft_{s+1}}{\ft_{s}} = \e^{-I_{\lambda}} + o(1)$. Therefore, by comparing consecutive terms in the above sum, one may conclude that the maximal term is given by some $s_1$ satisfying 
\[\frac{(s_1+1)(1-q)}{s_1+1-M} \e^{-I_{\lambda}} = 1 + o(1), \text{ or equivalently } s_1 = (1+o(1)) \frac{M}{1-(1-q)\e^{-I_{\lambda}}}.\]
Furthermore, from $s = 2s_1$ on, say, the terms of the above sum decrease exponentially, and therefore 
\begin{equation}\label{eq:bounds sum}
\binom{s_1}{M} q^{M} (1-q)^{s_1-M} \ft_{s_1} \le \sum_{s=M}^{\ell} \binom{s}{M} q^{M} (1-q)^{s-M} \ft_s = \cO\left(s_1 \binom{s_1}{M} q^{M} (1-q)^{s_1-M} \ft_{s_1}\right).
\end{equation}
Now, by~\eqref{eq:Et} one may notice that both the lower and the upper bound are of the form 
\[\e^{(\log q+\cO(1)) M} \ft_{s_1} = \e^{(\log q+\cO(1)) M} n.\] 
Thus, for every $\eps > 0$, if $M\ge (1+\eps)\tfrac{\log n}{\log q^{-1}}$, then a.a.s.\ $\widetilde Z_M^{\ell} = 0$, while if $M\le (1-\eps)\tfrac{\log n}{\log q^{-1}}$, a.a.s.\ $\widetilde Z_M^{\ell} \ge 1$, which finishes the proof of the lemma.
\end{proof}

\subsection{Further preliminaries on connectivity and CA-connectivity} \label{sec:proof.ii}

The two results in this section may be found as Lemma~3.2 and Lemma~3.3 in~\cite{LS22}.

For $I\subset [k]$, write $\lambda_I^* = \Lambda - \sum_{i\in I} \lambda_i$. Also, for two vertices $u$ and $v$ in a graph $H$, we write $u\xleftrightarrow{H\,}v$ for the event that $u$ and $v$ are connected by a path in $H$, and $u\stackrel{H}{\centernot\longleftrightarrow}v$ for the opposite event.

\begin{lemma}[Lemma~3.2 in~\cite{LS22}]\label{lem:Ijconnect}
For every $j\in [k]$ and $I\subseteq [k]\setminus \{j\}$ such that $\max(\lambda_I^*,\lambda_j^*)<1$, one has 
$$\lim_{M\to \infty} \mathbb P\big(\exists u\neq v: u\xleftrightarrow{G^I}v,\,  u\xleftrightarrow{G^j}v,\, u\stackrel{G^{I\cup\{j\}}}{\centernot\longleftrightarrow}v, \, \max(|\cC^I(u)|,|\cC^j(u)|)\ge M\big) = 0$$
uniformly in $n$.
\end{lemma}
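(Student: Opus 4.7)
I plan to prove the lemma by a first moment argument. For each $M\ge 1$, let $X_M$ denote the number of ordered pairs $(u,v)\in[n]^2$ with $u\ne v$ satisfying all four conditions in the event (call this per-pair event $E_{u,v}$); by Markov's inequality, $\Pr(X_M\ge 1)\le \mathbb E[X_M]$, so it suffices to show $\mathbb E[X_M]\to 0$ as $M\to\infty$, uniformly in $n$. By the symmetric roles of $\cC^I$ and $\cC^j$, I focus on pairs satisfying $|\cC^I(u)|\ge M$ (the case $|\cC^j(u)|\ge M$ being analogous).

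The argument relies on two standard facts. First, since $\max(\lambda_I^*,\lambda_j^*)<1$, both $G^I$ and $G^j$ are subcritical Erd\H{o}s--R\'enyi random graphs, and classical branching-process estimates provide constants $C,c>0$ (depending only on $\lambda_I^*,\lambda_j^*$) such that $\Pr(|\cC^I(u)|\ge M)\le C\e^{-cM}$ and $\Pr(|\cC^j(u)|\ge M)\le C\e^{-cM}$, uniformly in $u\in[n]$ and $n\ge 1$. Second, since $I\cap\{j\}=\emptyset$, the three graphs $G^{I\cup\{j\}}$, $G_j$, and $G_I$ are mutually independent, and $G^I=G^{I\cup\{j\}}\cup G_j$ while $G^j=G^{I\cup\{j\}}\cup G_I$. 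Consequently, conditional on $G^{I\cup\{j\}}$ and on the event $\cC^{I\cup\{j\}}(u)\ne\cC^{I\cup\{j\}}(v)$, the events ``$u\xleftrightarrow{G^I}v$'' and ``$u\xleftrightarrow{G^j}v$'' are independent, depending respectively on $G_j$ and $G_I$ alone.

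For each pair $(u,v)$, writing $A=\cC^{I\cup\{j\}}(u)$ and, on the event $v\notin A$, $A'=\cC^{I\cup\{j\}}(v)$, the conditional independence gives
\[
\Pr(E_{u,v})=\mathbb E\!\left[\mathbf 1_{v\in\cC^I(u)\setminus A,\,|\cC^I(u)|\ge M}\cdot\Pr\bigl(v\in\cC^j(u)\,\big|\,G^{I\cup\{j\}}\bigr)\right].
\]
The conditional factor is the probability that $A$ and $A'$ lie in the same component of the coarsened graph on $G^{I\cup\{j\}}$-components whose edges are supplied by $G_I$. A path-counting argument in this super-graph, in which each super-edge between components $A_i$ and $A_{i+1}$ has probability at most $|A_i||A_{i+1}|\cdot(\Lambda-\lambda_I^*)/n$, combined with the subcriticality of $G^j$ (which makes the geometric series of path weights converge), yields a bound of the form $C'|A||A'|/n$. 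Since $A,A'\subseteq\cC^I(u)$, summing over $v\in\cC^I(u)\setminus A$ produces $\sum_{A'}|A'|^2\le|\cC^I(u)|^2$, and the subsequent sum over $u\in[n]$ cancels the factor $n^{-1}$. Combined with the exponential tail, this yields
\[
\mathbb E[X_M]\le C''\,\mathbb E\!\left[|\cC^I(u)|^3\cdot\mathbf 1_{|\cC^I(u)|\ge M}\right]=O\bigl(M^3\e^{-cM}\bigr),
\]
which vanishes as $M\to\infty$, uniformly in $n$.

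The main technical obstacle is the bridging bound $\Pr(v\in\cC^j(u)\mid G^{I\cup\{j\}})\le C'|A||A'|/n$. One route is the path-series expansion sketched above, where convergence is guaranteed by the subcriticality of $G^j$. Alternatively, one can avoid conditioning by invoking the standard unconditional two-point estimate $\Pr(w_1\xleftrightarrow{G^j}w_2)\le C'/n$ for distinct $w_1,w_2\in[n]$ (a direct path-counting consequence of $\lambda_j^*<1$) and coupling it with a union bound over $(w_1,w_2)\in A\times A'$. Either way, once the bridging bound is in hand, the rest of the argument is a routine bookkeeping that produces the desired uniform-in-$n$ tail.
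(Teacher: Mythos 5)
Your overall skeleton — first moment over pairs, the decomposition $G^I=G^{I\cup\{j\}}\cup G_j$, $G^j=G^{I\cup\{j\}}\cup G_I$, conditional independence of the two connection events given $G^{I\cup\{j\}}$, exponential tails for subcritical clusters, and the final bookkeeping giving $\cO(M^3\e^{-cM})$ uniformly in $n$ — is sound, and it is the natural route (the present paper only cites \cite{LS22} for this lemma, so there is no in-paper proof to compare against). The genuine gap is exactly the step you flag as the ``main technical obstacle'': the quenched bound $\Pr(v\in\cC^j(u)\mid G^{I\cup\{j\}})\le C'|A||A'|/n$. As stated, i.e.\ for every realization of $G^{I\cup\{j\}}$, it is simply false: if, say, the realization consists of the singletons $\{u\},\{v\}$ plus roughly $\sqrt n$ components of size $\sqrt n$, then adding the independent $G_I$-edges connects $u$ to $v$ with probability bounded away from $0$, while $|A||A'|/n=1/n$. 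Your justification (``convergence of the geometric series is guaranteed by the subcriticality of $G^j$'') conflates a property of the law of $G^j$ with a property of the conditioned realization: the path-series over super-components is dominated by $\sum_{r}\big((\Lambda-\lambda_I^*)\,n^{-1}\sum_{\cC}|\cC|^2\big)^r$, where the sum is over the components of the \emph{realized} $G^{I\cup\{j\}}$, and this converges only when the realized susceptibility $n^{-1}\sum_{\cC}|\cC|^2$ is small enough — which holds with high probability, not surely. The proposed fallback is also broken: $A\times A'$ is determined by $G^{I\cup\{j\}}\subseteq G^j$, so the unconditional two-point estimate cannot be union-bounded over this random set; worse, for $w_1\in A$, $w_2\in A'$ the event $w_1\xleftrightarrow{G^j}w_2$ coincides with $u\xleftrightarrow{G^j}v$, so this union bound yields no factor $|A||A'|/n$, and without conditioning you lose the factorization altogether, since the $G^I$- and $G^j$-connection events are positively correlated through $G^{I\cup\{j\}}$.

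The gap is repairable within your framework, but the repair has to be made explicit. Introduce the $G^{I\cup\{j\}}$-measurable good event $\cG=\{\sum_{\cC}|\cC|^2\le Kn\}$, where the sum runs over the components of $G^{I\cup\{j\}}$ and $K$ is chosen with $(\Lambda-\lambda_I^*)K<1$; such a $K$ exceeding the typical susceptibility $(1-\lambda_{I\cup\{j\}}^*)^{-1}$ exists precisely because $\lambda_j^*<1$ (note $(\Lambda-\lambda_I^*)+\lambda_{I\cup\{j\}}^*=\lambda_j^*$), and $\cG$ holds with probability $1-o_n(1)$ by standard concentration of the subcritical susceptibility. On $\cG$ your path-counting argument does give $\Pr(v\in\cC^j(u)\mid G^{I\cup\{j\}})\le C'|A||A'|/n$, and since $\mathbf 1_{\cG}$ is $G^{I\cup\{j\}}$-measurable it slots into your factorization identity; off $\cG$ you bound the contribution by $\Pr(\cG^c)=o_n(1)$, which is compatible with the uniform-in-$n$ statement because the event of interest forces $n\ge M$, so $\sup_{n}\Pr(\cdot)=\sup_{n\ge M}\Pr(\cdot)\le \cO(M^3\e^{-cM})+\sup_{n\ge M}o_n(1)\to 0$ as $M\to\infty$. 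Without this (or an equivalent annealed treatment of the bridging step), the central estimate of the proof is unsupported.
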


We call a set \emph{connected} if its vertices belong to the same connected component.

\begin{lemma}[Lemma~3.3 in~\cite{LS22}]\label{lem:intersection}
Suppose that $\lambda_{k-1}^* < 1$. Then,
\[\lim_{M\to \infty} \Pr(\exists S\subseteq [n]: |S|\ge M, \text{$S$ is connected in each of $G^1,\dots,G^{k-1}$ but not in $G^k$}) = 0\]
uniformly in $n$.
\end{lemma}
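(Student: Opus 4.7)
The plan is to reduce, via an iterated application of Lemma~\ref{lem:Ijconnect}, the joint connectivity requirements across $G^1,\ldots,G^{k-1}$ to a single connectivity requirement in the color-$k$ graph $G_k$, and then to analyse the resulting simpler event by a first moment argument exploiting the independence of $G_k$ and $G^k$.

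For the reduction, suppose the event of the lemma holds with witness $S$ of size at least $M$. Fix any $u\in S$: since $S\subseteq \cC^i(u)$ for every $i\le k-1$, one has $|\cC^i(u)|\ge|S|\ge M$. I then apply Lemma~\ref{lem:Ijconnect} sequentially at steps $t=1,\ldots,k-2$ with $(I,j)=(\{1,\ldots,t\},t+1)$: the rate conditions $\lambda_I^*,\lambda_j^*\le\lambda_{k-1}^*<1$ hold because $I\cup\{j\}\subseteq[k-1]$, and the max-component hypothesis $|\cC^j(u)|\ge M$ is automatic for every $u\in S$. A union bound over these $k-2$ applications yields that, with probability at least $1-(k-2)\eta_M$ where $\eta_M\to 0$ uniformly in $n$, no pair appears as a bad witness in any of the applications; hence every pair $u,v\in S$ satisfies $u\xleftrightarrow{G^{[k-1]}}v$, i.e.\ $u\xleftrightarrow{G_k}v$, so that $S$ is contained in a single connected component of $G_k$.

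It then remains to bound the probability of the reduced event: there exists $S\subseteq [n]$ with $|S|\ge M$ contained in a single $G_k$-component but whose vertices span at least two components of $G^k$. Since the edge sets of $G_k$ and $G^k$ are disjoint, the two graphs are independent. Because $\lambda_k<\lambda_{k-1}^*<1$, the graph $G_k$ is subcritical, and by~\eqref{eq:Et} the expected number of $G_k$-components of size $m$ is of order $n\e^{-I_{\lambda_k}m}/m^{5/2}$. A first moment computation then bounds the reduced probability by a sum over $m\ge M$ of this count times the conditional probability that the vertex set of a fixed size-$m$ $G_k$-component fails to be contained in a single $G^k$-component.

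I expect the main obstacle to be controlling this last factor: in regimes where $G^k$ is supercritical or near-critical, the spread probability is typically close to $1$, so the uniform-in-$n$ decay cannot come from it alone. Mirroring the analysis in the proof of Proposition~1.3 of~\cite{LS22}, the desired bound should be obtained by combining the exponential scarcity of large subcritical $G_k$-components with a refined structural accounting of the $G^k$-picture inside such components, separating the cases in which the vertices of the putative component lie mostly in the $G^k$-giant (when present) and in which they lie in several small $G^k$-components.
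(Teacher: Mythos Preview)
The paper does not prove this lemma; it merely quotes it from~\cite{LS22}. So there is no in-paper proof to compare against. That said, two substantive points about your attempt:

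\textbf{The stated lemma has a typo.} As written, with ``not in $G^k$'', the assertion is false. Take $k=2$, $\lambda_2=\lambda_1^*<1$, and any $\lambda_1>0$. Then $G^1=G_2$ is subcritical and a.a.s.\ has many components of size $\ge M$; for any such component $C$, the vertices of $C$ form a uniformly random $|C|$-subset of $[n]$ from the point of view of the independent graph $G^2=G_1$, and hence a.a.s.\ fail to lie in a single $G_1$-component. So $\Pr(\exists S)\to 1$ for each fixed $M$, and the uniform limit is $1$, not $0$. The intended statement --- confirmed by both applications in Section~\ref{sec 3}, where the lemma is used to force a large CA-component to be connected in $G_k$ --- is with ``not in $G_k$'' in place of ``not in $G^k$''.

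\textbf{For the intended statement, your reduction is already the whole proof.} Your iterated use of Lemma~\ref{lem:Ijconnect} with $(I,j)=([t],t+1)$ for $t=1,\dots,k-2$ is correct (the hypotheses $\lambda_I^*,\lambda_j^*\le\lambda_{k-1}^*<1$ and $|\cC^j(u)|\ge M$ are exactly as you say), and it shows that off an event of probability $o_M(1)$ uniformly in $n$, any witness $S$ is connected in $G^{[k-1]}=G_k$. But then ``$S$ connected in $G_k$'' directly contradicts ``$S$ not connected in $G_k$'', so the original event is contained in the union of the $k-2$ bad events, and you are done. There is no Step~2. Your attempted Step~2 --- bounding $\Pr(\exists S$ connected in $G_k$ but not in $G^k)$ --- is precisely the false statement above, which is why you could not close it; the ``main obstacle'' you flagged is not an obstacle but an impossibility.
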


\section{Proof of Theorem~\ref{thm:critical}}\label{sec 3}

We first show the second part of the theorem. Denote by $\widetilde \cC_{\max}$ the largest CA-component in $G$. As the vertices of $\widetilde \cC_{\max}$ form a connected set in $G^i$ for every $i\in [k-1]$, by Lemma~\ref{lem:intersection} we have that
\[\lim_{M\to \infty}\sup_{n\ge M}\,  \Pr(|\widetilde \cC_{\max}|\ge M, \text{$\widetilde \cC_{\max}$ is not connected in $G_k$}) = 0.\]
On the other hand, if $\widetilde \cC_{\max}$ is connected in $G_k$, it is obtained by intersecting a connected component in $G^k$ and one in $G_k$. Moreover, by~\eqref{LLN.giant} the largest component in $G(n, \lambda_k^*/n)$ is a.a.s.\ of size $(2+o(1)) \zeta n = \cO(n^{1-\eps})$, and moreover, a.a.s.\ the size of the largest connected component in the subcritical random graph $G_k$ is $\cO(\log n)$. We condition on these two events and reveal the sizes of the connected components of $G_k$ and $G^k$. 

Now, note that for every $s\ge 1$, the probability that an $s$-tuple of vertices is connected in both $G_k$ and $G^k$ may be expressed in terms of the component sizes only. In particular, a union bound over all components in $G^k$, all components in $G_k$ and all $(\lfloor 2\eps^{-1}\rfloor + 1)$-tuples of vertices in $G$ shows that the probability of having a set of $\lfloor 2\eps^{-1}\rfloor + 1$ vertices that are connected both in $G^k$ and in $G_k$ is bounded from above by
\begin{equation}\label{eq:fst moment}
n^2\cdot n^{\lfloor 2\eps^{-1}\rfloor + 1}\cdot \cO\bigg(\bigg(\frac{n^{1-\eps}}{n}\bigg)^{\lfloor 2\eps^{-1}\rfloor + 1}\bigg(\frac{\log n}{n}\bigg)^{\lfloor 2\eps^{-1}\rfloor + 1}\bigg) = o(1).
\end{equation}
Hence, using that $n\ge M$,
\begin{align*}
\lim_{M\to \infty}\sup_{n\ge M}\, \Pr(|\widetilde \cC_{\max}|\ge M) 
&\le\; \lim_{M\to \infty}\sup_{n\ge M}\, \Pr(|\widetilde \cC_{\max}|\ge M, \text{$\widetilde \cC_{\max}$ is not connected in $G_k$})\\
&+\;\lim_{n\to \infty}\, \Pr(|\widetilde \cC_{\max}|\ge \lfloor 2\eps^{-1}\rfloor + 1, \text{$\widetilde \cC_{\max}$ is connected in $G_k$}) = 0,
\end{align*}
which finishes the proof of the second part.

For the first part, the fact that a.a.s.\ every connected component in $G^k$ except for the largest one contains $n^{o(1)}$ vertices (see Theorem~11 in~\cite{Bol84}), and a computation analogous to~\eqref{eq:fst moment} show that a.a.s.\ for every such component in $G^k$ and every component in $G_k$, their common intersection is of size at most 3. On the other hand, Lemma~\ref{lem:intersection} implies that a.a.s.\ every CA-component with at least $M_0\gg 1$ vertices (with $M_0$ defined as in Lemma~\ref{lem:aux}) is obtained as intersection of the largest component in $G^k$ with some connected component in $G_k$. Hence, combining~\eqref{LLN.giant}, Lemma~\ref{lem:stoch.giant} for $\eps = 1$, and Lemma~\ref{lem:main} for Bernoulli random variables with parameters $q = \zeta$ and $q = 4\zeta$ finishes the proof since in both cases a.a.s.\ $\max\{s:\widetilde Z_s\ge 1\} = (1+o(1)) \tfrac{\log n}{\log \zeta^{-1}}$.

\section{Concluding remarks}\label{sec 4}
In this note, we extended a line of research dedicated to color-avoiding percolation in the context of ER random graphs. In particular, our Theorem~\ref{thm:critical} identifies the correct order of the size of the largest CA-component when $\lambda_k^* = 1+o(1)$, and provides the constant in front of the first term when $\log(\lambda_k^* - 1) = o(\log n)$.

Some intriguing questions remain open.
\begin{itemize}
\item As mentioned in~\cite{LS22}, the range where the intermediate regime is well understood is contained in the second part of Theorem~\ref{thm:k>2}. It would be interesting to identify the behavior of the largest CA-component when $\lambda_{k-1}^* \ge 1 > \lambda_1^*$.
\item While we know that under the conditions of Theorem~\ref{thm:k>2}~\eqref{crit ii}, the size of the largest CA-component is a tight random variable, we do not have good understanding of its distribution. Finding this distribution is another open problem deserving attention.
\end{itemize}

\paragraph{Acknowledgements.} The author is grateful to Bruno Schapira and to Dieter Mitsche for many useful discussions.

\bibliographystyle{plain}
\bibliography{Refs}

\end{document}